\documentclass{amsart}
\usepackage[all]{xy}
\usepackage{verbatim}
\usepackage{color}
\usepackage{amsthm}
\usepackage{amssymb}
\usepackage[colorlinks=true]{hyperref}
\usepackage{graphicx}
\usepackage{mathtools}



\setcounter{equation}{0}

\numberwithin{equation}{section}

\newtheorem{theorem}[equation]{Theorem}
\newtheorem*{theorem*}{Theorem}

\newtheorem*{conjecture*}{Mamma Conjecture}
\newtheorem*{conjecture1*}{Mamma Conjecture (revisited)}

\newtheorem{corollary}[equation]{Corollary}
\newtheorem*{corollary*}{Corollary}

\theoremstyle{remark}

\theoremstyle{remark}

\setcounter{tocdepth}{1}

\newcommand{\cC}{{\mathcal C}}
\newcommand{\cD}{{\mathcal D}}




\newcommand{\bbA}{\mathbb{A}}

\newcommand{\bbZ}{\mathbb{Z}}


\DeclareMathOperator{\Id}{Id}






\newcommand{\bbK}{I\mspace{-6.mu}K}

\newcommand{\dg}{\mathrm{dg}}



\newcommand{\too}{\longrightarrow}


\let\oldmarginpar\marginpar
\def\marginpar#1{\oldmarginpar{\tiny #1}}

\def\multiset#1#2{\ensuremath{\left(\kern-.3em\left(\genfrac{}{}{0pt}{}{#1}{#2}\right)\kern-.3em\right)}}

\begin{document}

\title[$K$-theory with coefficients of cyclic quotient singularities]{Algebraic $K$-theory with coefficients of \\cyclic quotient singularities}
\author{Gon{\c c}alo~Tabuada}

\address{Gon{\c c}alo Tabuada, Department of Mathematics, MIT, Cambridge, MA 02139, USA}
\email{tabuada@math.mit.edu}
\urladdr{http://math.mit.edu/~tabuada}
\thanks{The author was partially supported by a NSF CAREER Award}

\date{\today}


\abstract{In this short note, by combining the work of Amiot-Iyama-Reiten and Thanhoffer de V\"olcsey-Van den Bergh on Cohen-Macaulay modules with the previous work of the author on orbit categories, we  compute the (nonconnective) algebraic $K$-theory with coefficients of cyclic quotient singularities.}
}

\maketitle
\vskip-\baselineskip
\vskip-\baselineskip

\section{Introduction and statement of results}\label{sec:introduction}
Let $k$ be an algebraically closed field of characteristic zero. Given an integer $d\geq 2$, consider the associated polynomial ring $S:=k[t_1, \ldots, t_d]$. Let $G$ be a cyclic subgroup of $\mathrm{SL}(d,k)$ generated by $\mathrm{diag}(\zeta^{a_1}, \ldots, \zeta^{a_d})$, where $\zeta$ is a primitive $n^{\mathrm{th}}$ root of unit and $a_1, \ldots, a_d$ are integers satisfying the following conditions: we have $0 < a_j < n$ and $\mathrm{gcd}(a_j,n)=1$ for every $1 \leq j \leq d$; we have $a_1 + \cdots + a_d=n$. The group $G$ acts naturally on $S$ and the invariant ring $R:=S^G$ is a Gorenstein isolated singularity of Krull dimension $d$. For example, when $d=2$, the ring $R$ identifies with the Kleinian singularity $k[u,v,w]/(u^n+vw)$ of type $A_{n-1}$. 

The affine $k$-scheme $X:=\mathrm{Spec}(R)$ is singular. Following Orlov \cite{Orlov, Orlov1}, we can then consider the associated dg category of singularities $\cD^{\mathrm{sing}}_\dg(X)$; also known as matrix factorizations or maximal Cohen-Macaulay modules. Roughly speaking, this dg category encodes all the crucial information concerning the isolated~singularity~of~$X$. 

Let us denote by $(Q,\rho)$ the quiver with relations defined by the following steps:
\begin{itemize}
\item[(s1)] consider the quiver with vertices $\bbZ/n\bbZ$ and with arrows $x^i_j\colon i \to i + a_j$, where $i \in \bbZ/n\bbZ$ and $1 \leq j \leq d$. The relations $\rho$ are given by $x^{i+a_j}_{j'}x^i_j = x^{i+a_{j'}}_j x_j^i$ for every $i \in \bbZ/n\bbZ$ and $1\leq j,j' \leq d$.
\item[(s2)] remove from (s1) all arrows $x^i_j\colon i \to i'$ with $i > i'$;
\item[(s3)] remove from (s2) the vertex $0$.
\end{itemize}
Consider the matrix $(n-1)\times (n-1)$ matrix $C$ such that $C_{ij}$ equals the number of arrows in $Q$ from $j$ to $i$ (counted modulo the relations). Let us write $M$ for the matrix $(-1)^{d-1}C(C^{-1})^T-\Id$ and $\mathrm{M}\colon \oplus_{r=1}^{n-1} \bbZ/l^\nu \to \oplus_{r=1}^{n-1} \bbZ/l^\nu$ for the associated (matrix) homomorphism, where $l^\nu$ is a (fixed) prime power. 
\begin{theorem}\label{thm:main}
We have the following computation:
$$
\bbK_i(\cD^{\mathrm{sing}}_\dg(X); \bbZ/l^\nu) \simeq \left\{ \begin{array}{lll}
\mathrm{cokernel}\,\,\mathrm{of}\,\, \mathrm{M}& \mathrm{if} & i\geq 0\,\,\mathrm{even} \\
\mathrm{kernel}\,\,\mathrm{of}\,\,\mathrm{M} & \mathrm{if} & i\geq 0 \,\,\mathrm{odd} \\
0 & \mathrm{if} & i <0\,.
\end{array} \right.
$$
\end{theorem}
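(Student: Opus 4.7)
The strategy is to combine three ingredients: (a) the description, due to Amiot--Iyama--Reiten and Thanhoffer de V\"olcsey--Van den Bergh, of $\cD^{\mathrm{sing}}_{\dg}(X)$ as a dg orbit category; (b) the author's previously established long exact sequence computing the nonconnective algebraic $K$-theory of a dg orbit category; and (c) Suslin rigidity plus Suslin's computation of the mod-$l^\nu$ $K$-theory of an algebraically closed field of characteristic zero. Concretely, I would first invoke (a) to obtain a Morita equivalence $\cD^{\mathrm{sing}}_{\dg}(X) \simeq \cD^{\mathrm{perf}}_{\dg}(A)/F$, where $A := kQ/\rho$ is the path algebra of the quiver with relations constructed in (s1)--(s3) and $F \simeq \nu_A\circ \Sigma^{1-d}$, with $\nu_A$ the Nakayama/Serre functor; the shift by $1-d$ reflects the $(d-1)$-Calabi--Yau nature of the stable category of maximal Cohen--Macaulay modules over $R$. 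Under this identification the Cartan matrix of $A$ is exactly the matrix $C$ from the statement.

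Next, applying (b) to this orbit description yields a long exact sequence
\begin{equation*}
\cdots \to \bbK_i(A;\bbZ/l^\nu) \xrightarrow{\Id - F_\ast} \bbK_i(A;\bbZ/l^\nu) \to \bbK_i(\cD^{\mathrm{sing}}_{\dg}(X);\bbZ/l^\nu) \to \bbK_{i-1}(A;\bbZ/l^\nu) \to \cdots
\end{equation*}
Since $A$ is smooth (equivalently, $C$ is invertible) and $k$ is algebraically closed of characteristic zero, (c) gives $\bbK_i(A;\bbZ/l^\nu) \simeq K_0(A)\otimes_{\bbZ}\bbK_i(k;\bbZ/l^\nu) \simeq \bigoplus_{r=1}^{n-1}\bbZ/l^\nu$ for $i\geq 0$ even, and $0$ for $i$ odd and for $i<0$. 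Because the groups vanish in every other parity, the long exact sequence automatically splits into short exact sequences exhibiting $\bbK_i(\cD^{\mathrm{sing}}_{\dg}(X);\bbZ/l^\nu)$ as the cokernel of $\Id - F_\ast$ in even nonnegative degrees, as the kernel in odd nonnegative degrees, and zero in negative degrees.

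The remaining and main step is to identify the endomorphism $F_\ast$ on $K_0(A)\simeq \bbZ^{n-1}$. In the appropriate basis, the Nakayama functor acts via the matrix $C(C^{-1})^T$, and the shift $\Sigma^{1-d}$ contributes the sign $(-1)^{1-d}=(-1)^{d-1}$, so that $F_\ast = (-1)^{d-1}C(C^{-1})^T$ and $\Id - F_\ast = -M$; the kernel and cokernel of $-M$ agree with those of $M$, producing the stated answer. The principal obstacle is precisely this last step: pinning down the exact form of the orbit autoequivalence $F$, choosing a compatible basis of $K_0(A)$, and verifying the sign conventions so that the Nakayama-matrix calculation lines up with the matrix $(-1)^{d-1}C(C^{-1})^T-\Id$ appearing in the theorem. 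The other two ingredients are applied in a largely formal way; the arithmetic of the Cartan matrix and the compatibility between the AIR/TV orbit description and the author's $K$-theoretic long exact sequence carries the real content.
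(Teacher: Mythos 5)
Your proposal is correct and follows essentially the same route as the paper: identify $\cD^{\mathrm{sing}}_\dg(X)$ with the generalized $(d-1)$-cluster dg category of $A=kQ/\langle\rho\rangle$ via Amiot--Iyama--Reiten and Thanhoffer de V\"olcsey--Van den Bergh, apply the author's distinguished triangle for dg orbit categories together with Suslin's computation of $\bbK_\ast(k;\bbZ/l^\nu)$, and identify the induced endomorphism of $\bigoplus_{r=1}^{n-1}\bbZ/l^\nu$ with $\pm M$ via the (inverse) Coxeter/Nakayama matrix $-C(C^{-1})^T$. The only cosmetic difference is that the paper writes the orbit functor as $\tau^{-1}\Sigma^{d-2}$ and converts using $S^{-1}\Sigma=\tau^{-1}$, whereas you work directly with $\nu_A\circ\Sigma^{1-d}$ and absorb the resulting overall sign into the observation that $M$ and $-M$ have the same kernel and cokernel.
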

Thanks to Theorem \ref{thm:main}, the computation of the (nonconnective) algebraic $K$-theory with coefficients of the cyclic quotient singularities reduces to the computation of (co)kernels of explicit matrix homomorphisms! To the best of the author's knowledge, these computations are new in the literature. In the particular case of Kleinian singularities of type $A_n$ they were originally established in \cite[\S3]{Klein}
\begin{corollary}\label{cor:main}
(i) If there exists a prime power $l^\nu$ and an even (resp. odd) integer $j \geq 0$ such that $\bbK_j(\cD^{\mathrm{sing}}_\dg(X); \bbZ/l^\nu)\neq 0$, then for every even (resp. odd) integer $i \geq 0$ at least one of the groups $\bbK_i(\cD^{\mathrm{sing}}_\dg(X)), \bbK_{i-1}(\cD_\dg^{\mathrm{sing}}(X))$ is non-zero.

(ii) If there exists a prime power $l^\nu$ such that $\bbK_i(\cD^{\mathrm{sing}}_\dg(X);\bbZ/l^\nu)=0$ for every $i \geq 0$, then the groups $\bbK_i(\cD^{\mathrm{sing}}_\dg(X)), i \geq 0$, are uniquely $l^\nu$-divisible.
\end{corollary}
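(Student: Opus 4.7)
\smallskip
\noindent\textbf{Proof plan.} My approach is to derive both items formally from Theorem \ref{thm:main} combined with the universal coefficient short exact sequence
\[
0 \longrightarrow \bbK_i(\cD^{\mathrm{sing}}_\dg(X)) \otimes_{\bbZ} \bbZ/l^\nu \longrightarrow \bbK_i(\cD^{\mathrm{sing}}_\dg(X);\bbZ/l^\nu) \longrightarrow \mathrm{Tor}^{\bbZ}_1\bigl(\bbK_{i-1}(\cD^{\mathrm{sing}}_\dg(X)),\, \bbZ/l^\nu\bigr) \longrightarrow 0,
\]
which arises (for every $i \in \bbZ$) by smashing the nonconnective $K$-theory spectrum with the mod-$l^\nu$ Moore spectrum $\bbS/l^\nu$.

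For item (i), the decisive input is the parity observation built into Theorem \ref{thm:main}: the group $\bbK_i(\cD^{\mathrm{sing}}_\dg(X);\bbZ/l^\nu)$ depends only on the parity of $i\geq 0$, being isomorphic to the cokernel of $\mathrm{M}$ for every even $i\geq 0$ and to the kernel of $\mathrm{M}$ for every odd $i\geq 0$. Consequently, nonvanishing at a single index $j$ of a given parity propagates automatically to nonvanishing at every index $i\geq 0$ of that same parity. Feeding this into the universal coefficient sequence at such an $i$, at least one of the outer terms $\bbK_i(\cD^{\mathrm{sing}}_\dg(X)) \otimes_{\bbZ} \bbZ/l^\nu$ and $\mathrm{Tor}^{\bbZ}_1(\bbK_{i-1}(\cD^{\mathrm{sing}}_\dg(X)),\,\bbZ/l^\nu)$ must be nonzero, which in turn forces at least one of $\bbK_i(\cD^{\mathrm{sing}}_\dg(X))$ and $\bbK_{i-1}(\cD^{\mathrm{sing}}_\dg(X))$ to be nonzero, as claimed.

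For item (ii), I would run the same universal coefficient sequence in the reverse direction. Simultaneous vanishing of $\bbK_i(\cD^{\mathrm{sing}}_\dg(X);\bbZ/l^\nu)$ for every $i\geq 0$ forces both outer terms of the sequence to vanish for every such $i$: vanishing of the left-hand term gives that $\bbK_i(\cD^{\mathrm{sing}}_\dg(X))$ is $l^\nu$-divisible, while vanishing of the Tor term, read off at index $i+1$, gives that $\bbK_i(\cD^{\mathrm{sing}}_\dg(X))$ has trivial $l^\nu$-torsion. Together, these two conditions amount exactly to unique $l^\nu$-divisibility.

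There is no real obstacle here: Theorem \ref{thm:main} already carries all the geometric and categorical content, and the remainder is a formal manipulation of the universal coefficient sequence. The only points requiring mild care are the parity bookkeeping in (i) and the index shift in (ii), together with the observation that the universal coefficient sequence is available in the nonconnective setting used throughout the paper.
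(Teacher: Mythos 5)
Your proposal is correct and follows essentially the same route as the paper: the paper's proof consists precisely of combining the universal coefficient sequence with Theorem \ref{thm:main}, the only cosmetic difference being that the paper writes the third term as the $l^\nu$-torsion subgroup ${}_{l^\nu}\bbK_{i-1}$ rather than as $\mathrm{Tor}^{\bbZ}_1(\bbK_{i-1},\bbZ/l^\nu)$, which is canonically the same group. The parity propagation for (i) and the index shift for (ii) that you spell out are exactly the details the paper leaves implicit.
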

\begin{proof}
Combine the universal coefficients sequence (see \cite[\S5]{Klein})
\begin{equation*}
0 \too \bbK_i(\cD^{\mathrm{sing}}_\dg(X))\otimes_\bbZ \bbZ/l^\nu \too  \bbK_i(\cD^{\mathrm{sing}}_\dg(X); \bbZ/l^\nu) \too {}_{l^\nu}\bbK_{i-1}(\cD^{\mathrm{sing}}_\dg(X)) \too 0
\end{equation*}
with the computation of Theorem \ref{thm:main}.
\end{proof}
\section{Examples}
\subsection*{A low dimensional example}
When $d=3$, $n=5$, $a_1=1$, and $a_2=a_3=2$, the above three steps (s1)-(s3) lead to the following quiver
$$ \xymatrix{
1 \ar[r]^x \ar@/^1pc/[rr]^-y \ar@/^2pc/[rr]^-z& 2 \ar@/_1pc/[rr]_-y \ar[r]^-x \ar@/_2pc/[rr]_-z & 3 \ar[r]^-x & 4
}
$$
with relations $xy=yx$, $yz=zy$, and $zx=xz$. Consequently, we obtain the matrix
$$
M=\begin{pmatrix}
0 & -1 & -3 &-3 \\
1 & -1 & -4 & -6 \\
3 & -2 & -10 & -13 \\
3 & 0 & -11 & -19
\end{pmatrix}\,.
$$
Since $\mathrm{det}(M)=26$, we have $\bbK_i(\cD^{\mathrm{sing}}_\dg(X); \bbZ/l^\nu)= 0$ whenever $l \neq 2, 13$. In the remaining two cases, a computation shows that $\bbK_i(\cD^{\mathrm{sing}}_\dg(X); \bbZ/l^\nu)\simeq \bbZ/l$ for every $i \geq 0$. Thanks to Corollary \ref{cor:main}, this implies that for every $i \geq 0$ at least one of the groups $\bbK_i(\cD^{\mathrm{sing}}_\dg(X)), \bbK_{i-1}(\cD^{\mathrm{sing}}_\dg(X))$ is non-trivial. Moreover, the groups $\bbK_i(\cD^{\mathrm{sing}}_\dg(X)), i \geq 0$, are uniquely $l$-divisible for every prime $l\neq 2, 13$.
\subsection*{A family of examples}
When $n=d\geq 3$ and $a_1=\cdots = a_d=1$, the above three steps (s1)-(s3) lead to the following quiver
$$
\xymatrix{
1 \ar@/^1pc/[r]^-{x_1} \ar@/_1pc/[r]_-{x_d} \ar@{}[r]|{\underset{}{\vdots}}& 2 \ar@/^1pc/[r]^-{x_1} \ar@{}[r]|{\underset{}{\vdots}} \ar@/_1pc/[r]_-{x_d}  &  3 & \cdots & d-3 \ar@/^1pc/[r]^-{x_1} \ar@{}[r]|{\underset{}{\vdots}} \ar@/_1pc/[r]_-{x_d}  &  d-2 \ar@/^1pc/[r]^-{x_1} \ar@{}[r]|{\underset{}{\vdots}} \ar@/_1pc/[r]_-{x_d}  & d-1
}
$$
with relations $x_j x_i = x_i x_j$. In the case where $d$ is odd, we obtain the matrix
$$
M_{ij}=\left\{ \begin{array}{lll}
-\sum_{r=0}^{i-1}\multiset{d}{r}\multiset{d}{(j-i)+r} & \mathrm{if}\,\,i< j\\
-\sum_{r=1}^{i-1} \multiset{d}{r}^{\!\!2}& \mathrm{if}\,\,i=j \\
- \sum_{r=1}^{j-1} \multiset{d}{(i-j)+r}\multiset{d}{r} + \multiset{d}{i-j} & \mathrm{if}\,\,i > j\,,
\end{array} \right.
$$
where $\multiset{}{}$ stands for the multicombination\footnote{Also known as the {\em multisubset} symbol.} symbol. Similarly, in the case where $d$ is even, we obtain the matrix
$$
M_{ij}=\left\{ \begin{array}{lll}
\sum_{r=0}^{i-1}\multiset{d}{r}\multiset{d}{(j-i)+r} & \mathrm{if}\,\,i< j\\
-2 + \sum^{i-1}_{r=1} \multiset{d}{r}^{\!\!2}& \mathrm{if}\,\,i=j \\
\sum_{r=1}^{j-1} \multiset{d}{(i-j)+r}\multiset{d}{r} - \multiset{d}{i-j}& \mathrm{if}\,\,i > j\,.
\end{array} \right.
$$
Whenever $d$ is a prime number, all the multicombinations
\begin{eqnarray*}
\multiset{d}{r}=\binom{d+r-1}{r} = \frac{(d+r-1) \cdots d (d-1)!}{r!(d-1)!} && 0 \leq r \leq d-2
\end{eqnarray*}
are multiples of $d$. This implies that the homomorphism $\mathrm{M}\colon \oplus^{d-1}_{r=1} \bbZ/d \to \oplus^{d-1}_{r=1} \bbZ/d$ is zero, and consequently that $\bbK_i(\cD^{\mathrm{sing}}_\dg(X);\bbZ/d)\simeq \oplus_{r=1}^{d-1} \bbZ/d$ for every $i \geq 0$. These isomorphisms are a far reaching generalization of the particular case $d=3$ originally established in \cite[Prop.~3.4]{Klein}. Thanks to Corollary \ref{cor:main}(i), we hence conclude that for every $i \geq 0$ at least one of the groups $\bbK_i(\cD^{\mathrm{sing}}_\dg(X)), \bbK_{i-1}(\cD^{\mathrm{sing}}_\dg(X))$ is non-trivial. 
\section{Proof of Theorem \ref{thm:main}}
Let $A$ be a finite dimensional $k$-algebra of finite global dimension. We write $\cD^b(A)$ for the bounded derived category of (right) $A$-modules and $\cD^b_\dg(A)$ for the canonical dg enhancement of $\cD^b(A)$. Consider the following dg functors
\begin{eqnarray*}
\tau^{-1}\Sigma^d\colon \cD^b_\dg(A) \too \cD^b_\dg(A) && d \geq 0\,,
\end{eqnarray*}
where $\tau$ stands for the Auslander-Reiten translaction. Following Keller \cite[\S7.2]{Orbit-Keller}, we can consider the associated dg orbit category $\cC_A^{(d)}:=\cD^b_\dg(A)/(\tau^{-1}\Sigma^d)^\bbZ$. Similarly to \cite[Thm.~2.5]{Klein} (consult \cite[\S2]{Orbit}), we have a distinguished triangle of spectra
$$\bigoplus^v_{r=1} \bbK(k;\bbZ/l^\nu) \stackrel{(-1)^d \Phi_A - \Id}{\too} \bigoplus^v_{r=1} \bbK(k;\bbZ/l^\nu) \to \bbK(\cC^{(d)}_A; \bbZ/l^\nu) \to \bigoplus^v_{r=1} \Sigma \bbK(k;\bbZ/l^\nu)\,,$$
where $v$ stands for the number of simple (right) $A$-modules and $\Phi_A$ for the inverse of the Coxeter matrix of $A$. Consider the (matrix) homomorphism
\begin{equation}\label{eq:homomorphism-1}
(-1)^d \Phi_A - \Id \colon \bigoplus^v_{r=1} \bbZ/l^\nu \too \bigoplus^v_{r=1} \bbZ/l^\nu\,.
\end{equation}
As proved by Suslin in \cite[Cor.~3.13]{Suslin}, we have $\bbK_i(k;\bbZ/l^\nu)\simeq \bbZ/l^\nu$ when $i\geq 0$ is even and $\bbK_i(k;\bbZ/l^\nu)=0$ otherwise. Consequently, making use of the long exact sequence of algebraic $K$-theory groups with coefficients associated to the above distinguished triangle of spectra, we obtain the following computations:
$$
\bbK_i(\cC_A^{(d)}; \bbZ/l^\nu) \simeq \left\{ \begin{array}{lll}
\mathrm{cokernel}\,\,\mathrm{of}\,\, \eqref{eq:homomorphism-1}& \mathrm{if} & i\geq 0\,\,\mathrm{even} \\
\mathrm{kernel}\,\,\mathrm{of}\,\,\eqref{eq:homomorphism-1} & \mathrm{if} & i\geq 0 \,\,\mathrm{odd} \\
0 & \mathrm{if} & i <0\,.
\end{array} \right.
$$
Consider also the following dg functors
\begin{eqnarray*}
S^{-1} \Sigma^d\colon \cD^b_\dg(A) \too \cD^b_\dg(A) && d \geq 0\,,
\end{eqnarray*}
where $S$ stands for the Serre dg functor. The associated dg orbit category $\cC_d(A):=\cD^b_\dg(A)/(S^{-1} \Sigma^d)^\bbZ$ is usually called the {\em generalized $d$-cluster dg category of $A$}; see \cite[\S1.3]{AIR} and the references therein. Since $S^{-1}\Sigma=\tau^{-1}$, we have $\cC_d(A)\simeq \cC_A^{(d-1)}$.

Now, let us take for $A$ the $k$-algebra $kQ/\langle \rho\rangle$ associated to the quiver with relations $(Q,\rho)$. As proved independently by Amiot-Iyama-Reiten \cite[\S5]{AIR} and Thanhoffer de V\"olcsey-Van den Bergh \cite{VB}, we have $\cD^{\mathrm{sing}}_\dg(X)\simeq \cC_{d-1}(A)$. Consequently, it remains then only to show that the homomorphism \eqref{eq:homomorphism-1}, with $d$ replaced by $d-2$, agrees with the homomorphism $\mathrm{M}$ associated to the matrix $M:=(-1)^{d-1}C(C^{-1})^T - \Id$. On the one hand, the number of simple (right) $A$-modules agrees with the number of vertices of the quiver $Q$. This implies that $v=n-1$. On the other hand, the inverse of the Coxeter matrix of $A$ can be expressed as $-C(C^{-1})^T$, where $C_{ij}$ equals the number of arrows in $Q$ from $j$ to $i$ (counted modulo the relations). This implies that $(-1)^{d-2}\Phi_A-\Id= (-1)^{d-1}C(C^{-1})^T -\Id = \mathrm{M}$, and hence concludes the proof. 

\medbreak\noindent\textbf{Acknowledgments:}
The author is grateful to Michel Van den Bergh for useful discussions.

\end{document}

\end{proof}